\numberwithin{equation}{section}
\theoremstyle{plain}
\newtheorem{thm}[equation]{Theorem}
\newtheorem{prop}[equation]{Proposition}
\newtheorem{lem}[equation]{Lemma}
\theoremstyle{definition}
\newtheorem{defn}[equation]{Definition}
\newtheorem{exa}[equation]{Example}
\newcommand{\mf}{\phi}           \newcommand{\mm}{\mu}
          \newcommand{\me}{\eta}
\renewcommand{\mp}{\epsilon}     
\newcommand{\ii}{i_{\mathrm{I}}} 
\renewcommand{\a}{\alpha}        \newcommand{\g}{\gamma}
\renewcommand{\d}{\delta}        \newcommand{\e}{\epsilon}
\newcommand{\la}{\lambda}        \renewcommand{\t}{\theta}
\renewcommand{\H}{\mathbb{H}}    \newcommand{\N}{\mathbb{N}}
\newcommand{\R}{\mathbb{R}}      \newcommand{\bB}{\bar{B}}
\newcommand{\tg}{\tilde{\g}}     \newcommand{\mino}{\wedge}
\renewcommand{\S}{\mathbb{S}}
\newcommand{\bbd}[1]{\partial_{\mathrm{B}}#1}
\newcommand{\ebd}[1]{\partial_{\mathrm{E}}#1}
\newcommand{\ibd}[1]{\partial_{\mathrm{I}}#1}
\newcommand{\Gbd}[1]{\partial_{\mathrm{G}}#1}
\newcommand{\ds}{\displaystyle}
\renewcommand{\(}{\left(}        \renewcommand{\)}{\right)}
\newcommand{\ips}[3]{\left<#1,#2\right>_{#3}}
\begin{document}
\title{Natural maps between CAT(0) boundaries}
\author{S.M. Buckley and K. Falk}%
\address{Department of Mathematics and Statistics, National University of
Ireland Maynooth, Maynooth, Co. Kildare, Ireland}%
\email{stephen.buckley@maths.nuim.ie}%
\address{Universit\"at Bremen, FB 3 - Mathematik, Bibliothekstra{\ss}e 1,
28359 Bremen, Germany}%
\email{khf@math.uni-bremen.de}%
\begin{abstract}
It is shown that certain natural maps between the ideal, Gromov, and end
boundaries of a complete CAT(0) space can fail to be either injective or
surjective. Additionally the natural map from the Gromov boundary to the end
boundary of a complete CAT(-1) space can fail to be either injective or
surjective.
\end{abstract}
\subjclass[2010]{Primary 51M05, 51M10. Secondary: 51F99}%
\keywords{CAT(0) space, Gromov hyperbolic space, ideal boundary, Gromov
boundary, natural map}
\thanks{Both authors thank the University of Bremen and National University of
Ireland Maynooth for hospitality and financial support of reciprocal visits
that enabled this research effort.}%
\maketitle

\renewcommand{\labelenumi}{(\alph{enumi})}

\section{Introduction}

In \cite{BF1}, a new class of metric spaces called {\it rough CAT(0)} spaces
are introduced, and their interior geometry is studied. The boundary
geometry is then defined and studied in \cite{BF2}. By {\it interior
geometry}, we mean the geometry of the space itself, whereas we use the term
{\it boundary} in the sense of a boundary at infinity. Specifically we
define a new notion of boundary at infinity for such spaces $X$ that we call
the {\it bouquet boundary} $\bbd X$.

Rough CAT(0) spaces include both of the well-known classes of CAT(0) spaces
and Gromov hyperbolic spaces, and it is proved in \cite{BF2} that $\bbd X$
coincides with the ideal boundary $\ibd X$ if $X$ is a complete CAT(0)
space, and it coincides with the Gromov boundary $\Gbd X$ if $X$ is a Gromov
hyperbolic space. With a view to proving that $\bbd X$ is nonempty when $X$
is a reasonable unbounded space, $\bbd X$ is also related to the end
boundary $\ebd X$, and in fact it is shown in \cite{BF2} that we have the
following commutative diagram of natural maps between these notions of
boundary at infinity:
$$
\xymatrix@C=6pc@R=3pc{
  \ibd X\;\ar@{^{(}->}[r]^{\ds{\ii}} &
  \bbd X
    \ar@{->}[d]_{\ds{\me}}
    \ar@{->}[r]^{\ds{\mm}} &
  \Gbd X \ar@{->}[dl]^{\ds{\mf}} \\
  & \ebd X & \\
}
$$

The results in \cite{BF2} are mainly positive: it is shown that $\ii$ is
injective, or bijective if $X$ is complete CAT(0), and that $\mm$ is
bijective if $X$ is Gromov hyperbolic. Furthermore conditions for $\me$ to
be surjective or injective are also given.

In this note, we instead concentrate on negative results in the context of
complete CAT(0) spaces. In this context, $\ii$ is a natural bijection, so we
will not even define the bouquet boundary: instead we identify it with the
well-known ideal boundary and study $\nu:=\mm\circ\ii$ instead of $\mm$.
Writing $\nu:=\mm\circ\ii$ and $\mp:=\me\circ\ii$, and omitting the
identification, we get the following simpler commutative diagram:
$$
\xymatrix@C=6pc@R=3pc{
  \ibd X\;
    \ar@{->}[d]_{\ds{\e}}
    \ar@{->}[r]^{\ds{\nu}} &
  \Gbd X \ar@{->}[dl]^{\ds{\mf}} \\
  \ebd X & \\
}
$$
It turns out that none of the natural maps in this last diagram are
necessarily bijective. In fact, we construct counterexamples in this note
that allow us to state the following theorem.

\begin{thm}\label{T:main1}\
There exists a complete CAT(0) space $X$ such that
\begin{enumerate}
\item $\nu:\ibd X\to\Gbd X$ is neither injective nor surjective;
\item $\mf:\Gbd X\to\ebd X$ is neither injective nor surjective;
\item $\mp:\ibd X\to\ebd X$ is neither injective nor surjective.
\end{enumerate}
Lastly, $\ibd Y$, $\Gbd Y$, and $\ebd Y$ may be empty even if $Y$ is an
unbounded complete CAT(0) space.
\end{thm}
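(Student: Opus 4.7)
The plan is to exhibit two explicit complete CAT(0) spaces: a space $X$ that witnesses all six failures in parts (a)--(c), and a separate space $Y$ that witnesses the last assertion. For $Y$, I would take the bouquet obtained by gluing countably many closed segments $[0,n]$, $n\in\N$, at the common endpoint $0$, yielding an $\R$-tree. This $Y$ is therefore complete CAT(0), and it is unbounded since $\mathrm{diam}\,Y=\infty$. Emptiness of the three boundaries is then direct: no geodesic ray exists because every branch has finite length; no Gromov sequence exists because points $x,y$ on distinct branches satisfy $(x\cdot y)_o=0$ while any sequence with $d(o,x_n)\to\infty$ must eventually use distinct branches; and no end survives the inverse limit because the components of $Y\setminus\bar B(o,R)$ are bounded segments that disappear once a larger ball is removed.

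For parts (a)--(c), I would build $X$ as a Reshetnyak-style gluing along convex subsets of elementary CAT(0) pieces. The engine of non-injectivity of $\nu$ is a Euclidean flat piece: in $\R^2$ every pair of rays from a basepoint at angular separation $\theta<\pi$ has Gromov product growing like $n(1-\sin(\theta/2))\to\infty$, so large angular families of ideal points must collapse to a single class in $\Gbd X$. Attached geodesic rays (glued at isolated points, trivially convex) contribute ideal classes that remain Gromov-distinguishable from the planar class and lie in separate ends of $X$. A hyperbolic piece (e.g.\ a copy of $\H^2$ glued along a geodesic line) supplies a continuum of Gromov classes mapping into a single end, furnishing non-injectivity of $\mf$. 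A carefully tapered appendage of bounded-length segments (a localized variant of the $Y$ construction) contributes an end not reached by any Gromov sequence, giving non-surjectivity of $\mf$; the two failures of $\mp$ then follow by chasing the commutative diagram given that $\ii$ is bijective.

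The most delicate point is non-surjectivity of $\nu$. In a complete CAT(0) space, any Gromov sequence $(x_n)$ produces segments $[o,x_n]$ whose restrictions to a fixed $[0,T]$ are Cauchy (Euclidean-comparison angles at $o$ tend to zero), so they limit to a genuine geodesic ray in $X$, which tends to force $\nu$ to be surjective. To produce a Gromov class outside the image of $\nu$ one must exploit the precise definition of $\Gbd X$ from \cite{BF2} and insert a ``comb'' of bounded segments whose endpoints align in Gromov product but whose limiting candidate ray escapes any geodesic extension available within $X$. The principal obstacle of the whole construction is thus assembling all six failures into a single gluing while preserving both completeness and the CAT(0) condition; a final bookkeeping step then traces each of $\nu,\mf,\mp$ on every boundary class and confirms the asserted failures.
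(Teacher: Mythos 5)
Your overall strategy---glue elementary complete CAT(0) pieces at points, each piece witnessing one failure, and use a bouquet of finite segments of unbounded length for the empty-boundary space $Y$---is exactly the paper's, and the easy halves are handled correctly: $\R^2$ gives non-injectivity of $\nu$ and $\mp$, the hyperbolic plane gives non-injectivity of $\mf$, your $Y$ is essentially the paper's tree of segments from $(0,0)$ to $(n,1)$, and deducing the two failures of $\mp$ from $\mp=\mf\circ\nu$ is fine.

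The genuine gap is the non-surjectivity of $\nu$ and of $\mf$, which you flag as the delicate point but never actually construct. The mechanism you sketch---a ``comb'' or ``tapered appendage'' of bounded-length segments---cannot work. In any complete $\R$-tree, a Gromov sequence $(x_n)$ with $\ips{x_m}{x_n}{o}\to\infty$ forces the geodesics from $o$ to $x_n$ to share arbitrarily long common initial segments (the Gromov product in a tree is exactly the length of the common initial segment), and the union of these is a genuine geodesic ray, so $\nu$ is surjective on such spaces; moreover bounded appendages contribute neither ends nor Gromov classes, and your own $Y$-type bouquet has \emph{no} ends at all, so a ``localized variant'' of it cannot supply the end that $\mf$ misses. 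The paper's construction is fundamentally infinite-dimensional and non-proper: the Hilbert flying saucer $X=\bigcup_i Y_i\subset\ell^2$, where $Y_i$ is the closed ball of radius $i$ in a codimension-$(i-1)$ subspace. There $\ibd X=\emptyset$ (a point at distance $j+1$ from the origin must have its first $j$ coordinates zero, so no geodesic ray from $o$ survives), yet $\Gbd X$ and $\ebd X$ are singletons; showing $\Gbd X$ is a nonempty singleton requires an explicit Gromov sequence built from orthogonal directions and a diagonal subsequence argument with disjoint cones, and the CAT($-1$) conformal deformation of the same space (where $\Gbd X\cong\ibd X=\emptyset$ while $\ebd X$ is a singleton) is what breaks surjectivity of $\mf$. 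None of this appears in your proposal, and it is the technical heart of the theorem. Incidentally, your heuristic that divergent Gromov products make the initial segments of $[o,x_n]$ Cauchy is already false in $\R^2$: two rays at a fixed angle $\theta<\pi$ have Gromov products tending to infinity while their restrictions to $[0,T]$ stay a definite distance apart.
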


Constructions were given in \cite[Section 3]{BK} for spaces in which $\nu$
fails to be injective or surjective. However those spaces were far from
being CAT(0). It is perhaps a little surprising that such counterexamples
also exist in the class of complete CAT(0) spaces.

By the results of \cite{BF2}, $\ibd X$, $\bbd X$, and $\Gbd X$ can all be
identified if $X$ is both Gromov hyperbolic and complete CAT(0), and so in
particular if $X$ is a complete CAT(-1) space. Thus $\nu$ is a bijection in
this case. However the map $\mf$ may still be badly behaved, as the
following result indicates.

\begin{thm}\label{T:main2}\
There exists a complete CAT(-1) space $X$ such that $\mf:\Gbd X\to\ebd X$ is
neither injective nor surjective. Also $\Gbd Y$ and $\ebd Y$ may be empty
even if $Y$ is an unbounded complete CAT(-1) space.
\end{thm}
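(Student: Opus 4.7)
The plan is to exhibit two complete CAT$(-1)$ spaces: a space $Y$, unbounded but with $\Gbd Y$ and $\ebd Y$ both empty, and a space $X$ on which $\mf$ is neither injective nor surjective. All gluings used below are carried out via Reshetnyak's theorem, which preserves completeness and the CAT$(-1)$ inequality when the gluing locus is convex.

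For the emptiness statement I would take $Y=\bigvee_{n\ge 1}[0,n]$, the wedge of intervals of length $n$ joined at a basepoint $o$, with the induced path metric. As a tree, $Y$ is $0$-hyperbolic and hence CAT$(\kappa)$ for every $\kappa$; it is complete and unbounded. Whenever $x_n,x_m$ lie in distinct arms one has $d(x_n,x_m)=d(x_n,o)+d(o,x_m)$, so $\ips{x_n}{x_m}{o}=0$; every Gromov sequence must therefore eventually lie in a single (bounded) arm, giving $\Gbd Y=\emptyset$. Similarly every component of $Y\setminus\bB(o,R)$ lies in a single arm and is bounded, so $\ebd Y=\emptyset$. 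The same cross-piece vanishing shows that for any wedge $A\vee_o B$ of complete CAT$(-1)$ spaces, both $\Gbd$ and $\ebd$ split as disjoint unions over the pieces.

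For $X$ I would wedge two carefully chosen pieces at a common basepoint. Non-injectivity comes for free from $\H^2$: its Gromov boundary is $S^1$ but it has only one end, so its $\mf$ crushes a circle to a point. For non-surjectivity I need a complete CAT$(-1)$ space $Z$ carrying an end into which no Gromov sequence converges. Since every end of a proper geodesic space contains a geodesic ray by an Arzel\`a--Ascoli extraction, $Z$ must be non-proper. My plan is to Reshetnyak-glue countably many complete CAT$(-1)$ pieces of growing scale---for instance hyperbolic disks of radii tending to infinity---in a pattern designed so that (i) some component of $Z\setminus\bB(o,R)$ remains unbounded as $R$ grows, yielding a genuine end, while (ii) every sequence going into that component is forced to migrate between pieces in a manner that keeps its Gromov products bounded. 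Setting $X:=\H^2\vee_o Z$ and invoking the wedge splitting above gives a complete CAT$(-1)$ space that inherits non-injectivity from the $\H^2$ factor and non-surjectivity from $Z$.

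The principal obstacle is the construction of $Z$. Its two defining requirements pull in opposite directions, because the tree-like and wedge-like gluings that most naturally destroy Gromov sequences, as in $Y$ above, also destroy all unbounded components of complements of balls. Engineering a gluing that preserves connectedness of some unbounded component across every compact exhaustion while forcing every candidate divergent path to jump incoherently between pieces---so that no subsequence can achieve $\ips{x_n}{x_m}{o}\to\infty$---is the crux of the proof.
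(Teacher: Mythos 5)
Your overall architecture matches the paper's: a one-point wedge of $\H^2$ (killing injectivity) with a non-proper complete CAT($-1$) space $Z$ having an end missed by $\mf$ (killing surjectivity), plus an unbounded $\R$-tree with bounded arms for the emptiness statement. The tree example and the $\H^2$ observation are correct and essentially identical to the paper's. However, there is a genuine gap: the space $Z$, which you yourself identify as ``the crux of the proof,'' is never constructed. A plan to ``glue hyperbolic disks of radii tending to infinity in a pattern designed so that'' the desired properties hold is not a proof, and the two requirements you list really do pull against each other, which is exactly why the statement is nontrivial. Moreover, your stated strategy --- directly forcing every sequence entering the end to have bounded Gromov products --- attacks the harder form of the problem; controlling Gromov products of arbitrary sequences in a glued space is delicate (compare the lengthy subsequence-extraction argument the paper needs even to show all Gromov sequences are \emph{equivalent} in a related space).

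The paper sidesteps this by exploiting the identification $\ibd Z\cong\Gbd Z$ valid for complete CAT($-1$) spaces: it suffices to produce a complete CAT($-1$) space with one end but \emph{no geodesic rays}, since then $\Gbd Z=\ibd Z=\emptyset$ while $\ebd Z$ is a singleton. The concrete model is the ``Hilbert flying saucer'' $Z=\bigcup_{i=1}^\infty Y_i\subset\ell^2$, where $Y_i$ is the closed ball of radius $i$ in the subspace $\{x: x_j=0 \text{ for } j<i\}$, equipped with the length metric induced by $ds^2=dt^2+\sinh^2(t)\,d\t^2$ in polar coordinates about the origin. Each $Y_i$ is then CAT($-1$), and $X_{i+1}=X_i\cup Y_{i+1}$ is obtained by gluing along a complete convex subset, so the whole space is complete CAT($-1$); it is star-shaped about $o$, so geodesics from $o$ are radial, and any unit-speed ray $\g$ from $o$ would have $\g(j+1)$ supported on coordinates $>j$ for every $j$, forcing $\g(1)=0$, a contradiction. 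The increasing codimension of the $Y_i$ is the mechanism you were missing: it is what lets the complement of every ball stay connected (one end) while destroying all rays, and hence --- via the CAT($-1$) identification rather than a direct Gromov-product estimate --- the entire Gromov boundary.
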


After some preliminaries in Section~\ref{S:prelims}, we give counterexamples
and prove the above results in Section~\ref{S:examples}.

\section{Preliminaries}\label{S:prelims}

Throughout this section, we suppose $(X,d)$ is a metric space. We say that
$X$ is {\it proper} if every closed ball in $X$ is compact.

We write $A\mino B$ for the minimum of two numbers $A,B$.

A {\it $h$-short segment} from $x$ to $y$, $x,y\in X$, is a path of length
at most $d(x,y)+h$, $h\ge 0$. A {\it geodesic segment} is a $0$-short
segment. $X$ is a {\it length space} if there is a $h$-short segment between
each pair $x,y\in X$ for every $h>0$, and a {\it geodesic space} if there is
a geodesic segment between each pair $x,y\in X$.

A {\it geodesic ray} in $X$ is a path $\g:[0,\infty)\to X$ such that each
initial segment $\g|_{[0,t]}$ of $\g$ is a geodesic segment. The {\it ideal
boundary} $\ibd X$ of $X$ is the set of equivalence classes of geodesic rays
in $X$, where two geodesic rays $\g_1,\g_2$ are said to be equivalent if
$d(\tg_1(t),\tg_2(t))$ is uniformly bounded for all $t\ge 0$, where $\tg_i$
is the unit speed reparametrization of $\g_i$, $i=1,2$.

We refer the reader to \cite[Part II]{BH} for the theory of CAT($\kappa$)
spaces for $\kappa\in\R$. In particular, we note that a smooth Riemannian
manifold is CAT($\kappa$) if and only if it is simply connected and has
sectional curvature $\le\kappa$. Also the ideal boundary $\ibd X$ of a
complete CAT(0) space can be identified with the set of unit speed geodesic
rays from some origin $o\in X$ \cite[II.8.2]{BH}: this identification is
independent of the choice of origin. If $X$ is a simply connected smooth
Riemannian manifold of sectional curvature $\le 0$, then $\ibd X$ is
homeomorphic to $\S^{n-1}$.

As discussed in the introduction, we can identify the ideal boundary and the
bouquet boundary of \cite{BF2} in the context of complete CAT(0) spaces. The
fact that these are not the same in rough CAT(0) spaces, or even in
incomplete CAT(0) spaces, and that the bouquet boundary is better behaved
than the ideal boundary, is discussed in detail in \cite[Section~4]{BF2}, so
we will not discuss it further here. We simply identify these notions for
complete CAT(0) spaces.

We refer the reader to \cite{GH}, \cite{CDP}, \cite{Va}, or \cite[Part
III.H]{BH} for the theory of Gromov hyperbolic spaces. We use the
non-geodesic definition: a metric space $(X,d)$ is {\it $\d$-hyperbolic},
$\d\ge 0$, if
$$ \ips xzw\ge \ips xyw\mino \ips yzw - \d\,, \qquad x,y,z,w\in X\,, $$
where $\ips xzw$ is the Gromov product defined by
$$ 2\ips xyw = d(x,w) + d(y,w) - d(x,y)\,. $$

A {\it Gromov sequence} in a metric space $X$ is a sequence $(x_n)$ in $X$
such that $\ips{x_m}{x_n}{o}\to\infty$ as $m,n\to\infty$. If $x=(x_n)$ and
$y=(y_n)$ are two such sequences, we write $(x,y)\in E$ if
$\ips{x_m}{y_n}{o}\to\infty$ as $m,n\to\infty$. Then $E$ is a reflexive
symmetric relation on the set of Gromov sequences in $X$, so its transitive
closure, which we denote by $\sim$, is an equivalence relation on the set of
Gromov sequences in $X$. The {\it Gromov boundary} $\Gbd X$ is the set of
equivalence classes $[(x_n)]$ of Gromov sequences.

The relation $E$ above is an equivalence relation if $X$ is Gromov
hyperbolic, but this is not true in general metric spaces \cite[1.5]{BK}.
Gromov sequences and the Gromov boundary have mainly been considered in
Gromov hyperbolic spaces, but they have also been defined as above in
general metric spaces \cite{BK}. The Gromov boundary is independent of the
choice of basepoint $o$.

The natural map $\nu:\ibd X\to\Gbd X$ is induced by the map $f(\la)=(x_n)$
that takes a geodesic ray $\la$ parametrized by arclength to a sequence of
points $(x_n)$, where $x_n=\la(t_n)$ and $(t_n)$ is any sequence of numbers
tending to infinity.

A CAT(-1) space is both CAT(0) and Gromov hyperbolic; see II.1.12 and
III.H.1.2 of \cite{BH}. Hence by Theorems~4.20 and 5.15 of \cite{BF2}, we
can identify $\ibd X$ with $\Gbd X$ if $X$ is a complete CAT(-1) space. A
related result is Lemma III.H.3.1 of \cite{BH}, which says that we can
identify $\ibd X$ with $\Gbd X$ if $X$ is a proper geodesic Gromov
hyperbolic space.

By an {\it end} of a metric space $X$ (with basepoint $o$), we mean a
sequence $(U_n)$ of components of $X\setminus \bB_n$, where
$\bB_n=\overline{B(o,n)}$ for fixed $o\in X$ and $U_{n+1}\subset U_n$ for
all $n\in\N$. We do not require $\bB_n$ to be compact. We denote by $\ebd X$
the collection of ends of $X$ and call it the {\it end boundary of $X$}.

Ends with respect to different basepoints are compatible under set
inclusion: defining $U_n,V_n$ for all $n\in\N$ to be components of
$X\setminus\overline{B(o,n)}$ and $X\setminus\overline{B(o',n)}$,
respectively, it is clear that $U_n$ is a subset of a unique $V_m$ whenever
$n-m>d(o,o')$. This compatibility gives rise to a natural bijection between
ends with respect to different basepoints, allowing us to identify them and
treat the end boundary as being independent of the basepoint.

Suppose $X$ is a complete CAT(0) space. If we map a geodesic ray $\la$ from
$o$ parametrized by arclength to the end $(U_n)$, where $U_n$ is the
component of $X\setminus \bB_n$ containing $\la(n+1)$, then this map induces
the natural map $\mp:\ibd X\to\ebd X$; see \cite[Theorem~4.24]{BF2}. The
natural map $\mf:\Gbd X\to\ebd X$ is also induced by the map taking a Gromov
sequence $(x_n)$ to the unique end ``containing'' it, in the sense that for
each $m\in\N$ there exists $N\in\N$ such that $x_n$ lies in some component
$U_m$ of $X\setminus \bB_m$ for all $n\ge N$; see
\cite[Proposition~5.19]{BF2}.

\section{Examples}\label{S:examples}

Euclidean $\R^n$ for $n>1$ is all we need to consider to prove that $\nu$
and $\mp$ can fail to be injective. As is well known, the ideal boundary of
$\R^n$ (with cone topology attached) is homeomorphic to $\S^{n-1}$, the
sphere of dimension $n-1$. By contrast the end boundary of $\R^n$ is clearly
a singleton set, and it turns out that $\Gbd\R^n$ is also a singleton set.

Let us indicate how to prove this last fact. First since the natural map
$\nu$ exists, $\Gbd\R^n$ is nonempty. We now appeal to Theorem~2.2 of
\cite{BK} which states that $\nu$ is surjective if $X$ is a proper geodesic
space. In fact, as is clear from the proof of that result, $\nu$ is induced
by the map that takes a geodesic ray $\g:[0,\infty)\to X$ parametrized by
arclength to the Gromov sequence $(\g(t_n))_{n=1}^\infty$, where $(t_n)$ is
any sequence of non-negative numbers with limit infinity. Since the ideal
boundary of a complete CAT(0) space can be viewed as the set of geodesic
rays from any fixed origin, it follows that we get representatives of all
points in $\Gbd\R^2$ by considering only the Gromov sequences
$x^t:=(na_t)_{n=1}^\infty$, where $a_t=(\cos t,\sin t)\in\R^2$, $ t\in\R$. A
straightforward calculation, or an appeal to Lemma~\ref{L:angle a}, shows
that $(x^t,x^s)\in E$ for all pairs $t,s$, except when $|t-s|$ is an odd
multiple of $\pi$, i.e.~except when $x^t$ and $x^s$ are tending to infinity
in opposite directions. But in the exceptional case, we have
$(x^t,x^{t+\pi/2})\in E$ and $(x^{t+\pi/2},x^s)\in E$, so all Gromov
sequences are equivalent.

The exceptional case $\R^n$ for $n=1$ is easily analyzed: $\nu$ and $\mp$
are bijective, and the cardinalities of $\ibd X$, $\Gbd X$, and $\ebd X$ are
all $2$.

We now generalize this result to arbitrary Hilbert spaces of dimension
greater than $1$. Note that the above method of proof fails in this context
since infinite dimensional Hilbert spaces are not proper. Although these
infinite dimensional examples are not necessary ingredients in the proofs of
our main results, the method of proof will be useful for later examples that
are needed.

\begin{prop}\label{P:Gb bad1}
Suppose first that $X$ is a Hilbert space of dimension greater than $1$.
Then $\ibd X$ has cardinality at least that of the continuum, while $\Gbd X$
and $\ebd X$ are singleton sets. Thus the natural maps $\nu$ and $\mp$ are
not injective.
\end{prop}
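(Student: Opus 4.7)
\noindent\emph{Proof proposal.} The plan is to treat the three boundaries separately, with $\Gbd X$ being the substantive case. Take $o=0$ as basepoint and write $x\cdot y$ for the Hilbert inner product. For $\ibd X$, pick two linearly independent unit vectors $e,f\in X$ and, for each $t\in[0,2\pi)$, consider the geodesic ray $\g_t(s)=s\bigl((\cos t)e+(\sin t)f\bigr)$: distinct values of $t$ give inequivalent rays, so $\ibd X$ has cardinality at least that of the continuum. For $\ebd X$, since $\dim X>1$ the complement $X\setminus\bB_n$ of any closed ball is path-connected---one goes around $\bB_n$ inside a suitable two-plane---so $\ebd X$ is a singleton.

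For $\Gbd X$, the key ingredient is a lower estimate on the Gromov product in a Hilbert space. Writing $r=d(x,o)$, $R=d(y,o)$, and $\cos\a=(x\cdot y)/(rR)$, the identity $d(x,y)^2=r^2+R^2-2rR\cos\a$ together with the elementary bound $1-\sqrt{1-t}\ge t/2$ for $t\in[0,1]$ yields
\[
\ips{x}{y}{o}\ge\frac{rR(1+\cos\a)}{2(r+R)}.
\]
Any Gromov sequence $(x_n)$ satisfies $d(x_n,o)\to\infty$ (take $m=n$ in the Gromov sequence condition). Fix orthogonal unit vectors $e\perp f$ in $X$, available since $\dim X>1$, and consider the three reference Gromov sequences $(ne)$, $(-ne)$, $(nf)$.

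Given an arbitrary Gromov sequence $(x_n)$, pass to a subsequence along which $\cos\t_n:=(x_n\cdot e)/d(x_n,o)$ converges to some $c\in[-1,1]$. If $c>-1$, the estimate gives $\ips{x_n}{me}{o}\to\infty$ as $n,m\to\infty$, so $(x_n)$ is $E$-related to $(ne)$; if $c=-1$, the same estimate applied with $-e$ in place of $e$ shows $(x_n)$ is $E$-related to $(-ne)$. Since $(\pm e)\cdot f=0$, the estimate also shows that $(\pm ne)$ is $E$-related to $(nf)$, and hence $(ne)\sim(-ne)$ by transitivity. Therefore every Gromov sequence is $\sim$-equivalent to $(ne)$, and $\Gbd X$ is a singleton.

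The main obstacle, relative to the $\R^n$ discussion above, is that we cannot invoke \cite[Theorem~2.2]{BK}, since infinite dimensional Hilbert spaces are not proper. The substitute is the explicit Gromov product estimate, which replaces the role of $\nu$ being surjective; the bridging of antipodal directions through an orthogonal $f$ is required for the same reason as in the $\R^n$ case, namely that $(ne)$ and $(-ne)$ are not themselves $E$-related.
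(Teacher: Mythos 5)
Your proof is correct and rests on the same key ingredient as the paper's (its Lemma~\ref{L:angle a}): a quantitative lower bound for the Gromov product of two points whose angle at the origin is bounded away from $\pi$, combined with an orthogonal direction used to bridge antipodal sequences. The only difference is organizational --- the paper pigeonholes a given pair of Gromov sequences among three pairwise-separated cones and joins them through a single common antipodal bridge $(-nu_i)$, whereas you pass to a subsequence with convergent direction cosine relative to a fixed $e$ and reduce every Gromov sequence to the one reference sequence $(ne)$; both devices do the same work.
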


Before proving Proposition~\ref{P:Gb bad1}, we first prove a simple but
useful lemma. In this lemma, and in the proof of Proposition~\ref{P:Gb
bad1}, $(\cdot,\cdot)$ is the inner product in a Hilbert space $X$,
$|\cdot|$ is the associated norm, and
$\angle(u,v)=\cos^{-1}\((u,v)/|u|\,|v|\)$ is the angle between two nonzero
vectors $u,v$ in $X$.

\begin{lem}\label{L:angle a}
Suppose $X$ is a Hilbert space, and that $u,v\in X\setminus\{0\}$ are such
that $\angle(u,v)\le\a$ for some $0<\a<\pi$. Then $\ips{u}{v}{0}\ge
k\,(|u|\mino|v|)$, where $k=k(\a)>0$.
\end{lem}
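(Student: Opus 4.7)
The plan is to reduce the inequality to a law-of-cosines calculation and extract an explicit constant $k(\a)$. Write $m := |u|\mino|v|$, $M := |u|\vee|v|$, and $\t := \angle(u,v) \le \a$. The starting point is the definition $\ips{u}{v}{0} = (|u|+|v|-|u-v|)/2$ together with the Hilbert-space identity
$$|u-v|^2 = |u|^2 + |v|^2 - 2|u||v|\cos\t = (M-m)^2 + 4Mm\sin^2(\t/2),$$
which follows from $(u,v) = |u||v|\cos\t$ and the half-angle formula.

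Next I would rationalize $|u|+|v|-|u-v|$ by multiplying and dividing by its conjugate $|u|+|v|+|u-v|$. The resulting numerator telescopes cleanly using the identity above: $(M+m)^2 - (M-m)^2 - 4Mm\sin^2(\t/2) = 4Mm\cos^2(\t/2)$. For the denominator I would use the triangle inequality $|u-v| \le |u|+|v| = M+m$ to obtain the upper bound $2(M+m)$. Combining these two estimates gives
$$\ips{u}{v}{0} \ge \frac{Mm\cos^2(\t/2)}{M+m}.$$

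The final step uses $M \ge m$, hence $M+m \le 2M$, to deduce $Mm/(M+m) \ge m/2$, which yields
$$\ips{u}{v}{0} \ge \tfrac12 \cos^2(\t/2)\,(|u|\mino|v|).$$
Since $\t \le \a$ and $\a/2 < \pi/2$, we have $\cos(\t/2) \ge \cos(\a/2) > 0$, so the lemma holds with the explicit constant $k(\a) := \tfrac12 \cos^2(\a/2)$. There is no real obstacle here: the whole argument is the chain of elementary identities just described. The only mildly delicate point is ensuring that $\cos(\t/2)$ stays uniformly bounded away from zero on the admissible range of angles, but this is immediate from the hypothesis $\a < \pi$.
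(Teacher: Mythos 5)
Your proof is correct. It starts from the same place as the paper's (the cosine rule in the Hilbert space), but finishes differently: the paper bounds $2\ips uv0\ge c\,f(t)$ with $f(t)=t+1-\sqrt{1+t^2-2t\cos\a}$, $t=b/c\le 1$, and then invokes convexity of $t\mapsto\sqrt{1+t^2+rt}$ to get $f(t)\ge f(1)\,t$, yielding $k=1-\sin(\a/2)$; you instead rationalize by the conjugate, which produces the exact identity $2\ips uv0=4|u|\,|v|\cos^2(\t/2)/\bigl(|u|+|v|+|u-v|\bigr)$, and then use only the triangle inequality and $M+m\le 2M$. Your route avoids the calculus/convexity step entirely and delivers the explicit (slightly smaller, but equally serviceable) constant $k(\a)=\tfrac12\cos^2(\a/2)$; all the intermediate identities you use (in particular $(M+m)^2-(M-m)^2-4Mm\sin^2(\t/2)=4Mm\cos^2(\t/2)$ and the monotonicity giving $\cos(\t/2)\ge\cos(\a/2)>0$) check out.
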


\begin{proof}
Writing $a:=|u-v|$, $b:=|u|$, and $c:=|v|$, we assume without loss of
generality that $b\le c$. By the cosine rule, $a^2 = b^2 + c^2 - 2bc\cos\a$.
Thus $2\ips{u}{v}{0}\ge cf(t)$, where $t=b/c$ and
$f(t)=t+1-\sqrt{1+t^2-2t\cos\a}$. Now $t\mapsto\sqrt{1+t^2+rt}$ is convex
for all $|r|\le 2$, so it follows by calculus that $f(t)\ge kt$, where
$k=f(1)>0$.
\end{proof}

\begin{proof}[Proof of Proposition~\ref{P:Gb bad1}]
Since $X$ is a complete CAT(0) space, its ideal boundary can be identified
with the set $R$ of unit speed geodesic rays from the origin. For a Hilbert
space, this latter set is naturally bijective to the sphere $S:=\partial
B(0,1)$ via the identification of $\g\in R$ with $\g(1)$. In particular, the
cardinality of $\ibd X$ is at least that of the continuum.

Now $(nu)_{n=1}^\infty$ is a Gromov sequence whenever $u\in S$, so certainly
$\Gbd X$ is nonempty. Suppose that $x=(x_n)$ and $y=(y_n)$ are Gromov
sequences. Pick $u_1,u_2,u_3\in S$ so that $\angle(u_i,u_j)\ge 2\pi/3$ for
each pair of distinct indices $i,j$; this can be even be done by picking
these points on a single great circle in $S$. For $i=1,2,3$, let $C_i$ be
the cone of points $u\in X\setminus\{0\}$ such that $\angle(u,u_i)<\pi/3$,
so that these three cones are pairwise disjoint. Thus, by taking
subsequences if necessary, we may assume that both of the sequences $x$ and
$y$ avoid one of these three cones $C_i$. Letting $z=(-nu_i)_{n=1}^\infty$,
and applying Lemma~\ref{L:angle a} with $\a=2\pi/3$, we see that $(x,z)$ and
$(z,y)$ both lie in $E$, and so $x\sim y$. Thus all Gromov sequences are
equivalent, as required.
\end{proof}

Finding examples where $\nu$ and $\mp$ fail to be surjective appears to be
more difficult than finding examples where they fail to be injective. The
key will be to consider a suitable metric subspace of the infinite
dimensional Hilbert space $\ell^2$ given by the following definition.

\begin{defn}\label{D:Hfs}
The {\it Hilbert flying saucer} is $X:=\bigcup_{i=1}^\infty
Y_i\subset\ell^2$, where $Y_i$ is the following closed disk of codimension
$i-1$ in $\ell^2$:
$$
Y_i := \{ x=(x_j)_{j=1}^\infty\;:\;
  \|x\|\le i,\; x_j=0 \text { for all } j<i \}\,.
$$
We attach to $X$ the induced length metric $d$. We next prove that $\nu$ can
fail to be surjective.
\end{defn}

\begin{thm}\label{T:Gb bad2}
The Hilbert flying saucer $X$ is a complete CAT(0) space. Moreover $\ibd X$
is empty, while $\Gbd X$ and $\ebd X$ are singleton sets. Thus the natural
maps $\nu$ and $\mp$ are not surjective.
\end{thm}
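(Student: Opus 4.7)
The plan is to verify four properties of $X$, from which all the claims in the theorem follow immediately: (a) $X$ is complete, (b) $X$ is CAT(0), (c) $\ibd X=\emptyset$, and (d) $\Gbd X$ and $\ebd X$ are singletons. Once (a)--(d) are in place, non-surjectivity of $\nu$ and $\mp$ is automatic since $\ibd X$ is empty but $\Gbd X$ and $\ebd X$ are not (the sequence $(ne_n)_{n=1}^\infty$ yields a Gromov sequence representing the unique end).

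For (a) and (b) I would first observe that the induced length metric $d$ on $X$ dominates the $\ell^2$-metric, since any path in $X\subset\ell^2$ has length at least its $\ell^2$-chord; so completeness reduces to the claim that $X$ is closed in $\ell^2$. Given a convergent sequence $x_n\to x$ in $\ell^2$ with $x_n\in Y_{k_n}$, one argues that either the $k_n$ are bounded (and a subsequence sits inside a single closed $Y_k$) or $k_n\to\infty$ (forcing every coordinate of $x$ to vanish, so $x=0\in X$). For CAT(0), I would induct on the truncated union $X_N:=\bigcup_{i\le N}Y_i$: writing $Z_m:=\{x:x_j=0\text{ for }j<m\}$, the intersection $X_N\cap Y_{N+1}$ equals $Z_{N+1}\cap\bB(0,N)$, and is geodesically convex in both $X_N$ and $Y_{N+1}$ because the relevant $X_N$-geodesics are straight $\ell^2$-segments lying inside $Y_N$. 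Reshetnyak's gluing theorem (II.11.1 of \cite{BH}) then yields CAT(0) at each stage, and passage to the increasing union preserves CAT(0) via the four-point condition.

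For (c), the key local fact is that $X\cap\bB(0,1)$ is just the closed $\ell^2$-unit-ball, so a unit-speed geodesic ray $\gamma$ from $0$ starts as a straight line $\gamma(t)=tv$ for some unit vector $v\in\ell^2$. Letting $i=i^*(v)$ denote the smallest index with $v_i\ne 0$, the segment $[0,iv]$ is an $X$-geodesic of length $i$ and $iv$ sits on the boundary sphere of $Y_i$ inside $Z_i$. Since $\|\gamma(i+s)-iv\|_{\ell^2}\le d(iv,\gamma(i+s))=s$, for sufficiently small $s>0$ the point $\gamma(i+s)$ has nonzero $i$-th coordinate, which combined with $\|\gamma(i+s)\|_{\ell^2}>i-1$ forces $\gamma(i+s)\in Y_i$ and hence $d(0,\gamma(i+s))=\|\gamma(i+s)\|_{\ell^2}\le i$, contradicting $d(0,\gamma(i+s))=i+s$. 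For the end boundary half of (d), since $d(0,x)=\|x\|$ one has $X\setminus\bB(0,n)=\bigcup_{k>n}(Z_k\cap\{n<\|\cdot\|\le k\})$; each annular shell is path-connected because $Z_k$ is infinite-dimensional, and consecutive shells overlap along $Z_{k+1}\cap\{n<\|\cdot\|\le k\}$, so the complement is connected and there is a unique end.

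The main obstacle is showing $\Gbd X$ is a singleton, for which I would adapt the three-direction chain argument of Proposition~\ref{P:Gb bad1}. Introduce three reference Gromov sequences $y^{(i)}_n=nu^{(i)}_n$ ($i=1,2,3$), where $u^{(i)}_n$ are unit vectors in $\mathrm{span}(e_n,e_{n+1})\subset Z_n$ spaced at pairwise $\ell^2$-angles $2\pi/3$. Using bridge points of the form $mu^{(j)}_n\in Y_m\cap Y_n=Z_n\cap\bB(0,m)$, one verifies that each $(y^{(i)}_n)$ is a Gromov sequence and that all three are pairwise $E$-related, with $\ips{y^{(i)}_m}{y^{(j)}_n}{0}\ge c\min(m,n)$. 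For an arbitrary Gromov sequence $(x_m)$, write $v_m=x_m/\|x_m\|$; because $u^{(1)}_n+u^{(2)}_n+u^{(3)}_n=0$, the three quantities $\alpha_i(m,n):=\langle v_m,u^{(i)}_n\rangle_{\ell^2}$ sum to zero, so the maximum is non-negative. A bridge-point computation with $z=\|x_m\|u^{(i)}_n$, in the spirit of Lemma~\ref{L:angle a}, then gives $\ips{x_m}{y^{(i)}_n}{0}\ge c\|x_m\|$ for the maximizing $i$. The hard part is that this maximizing index depends on both $m$ and $n$; to extract a single reference one uses that $(v_m)_j\to 0$ as $j\to\infty$ for each fixed $m$ (and is zero once $k_m>j$), combined with a pigeonhole and diagonalization argument, to find $i_*$ with $(x_m)\mathrel{E}y^{(i_*)}$; chaining through $y^{(i_*)}\sim y^{(1)}$ then gives $(x_m)\sim y^{(1)}$, proving $\Gbd X$ is a singleton.
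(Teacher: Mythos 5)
Most of your outline matches the paper's proof and is sound: the inductive Reshetnyak gluing for CAT(0), the emptiness of $\ibd X$ via uniqueness of geodesics from the origin (the paper argues through star-shapedness, you through the local structure at $0$ and the norm bound on $Y_i$; both work), the connectedness of the spherical shells giving one end, and a Gromov sequence such as $(ne_n)$ showing $\Gbd X\ne\emptyset$. For completeness, reducing to closedness of $X$ in $\ell^2$ needs the extra (true) observation that $\ell^2$-convergence within $X$ implies $d$-convergence; this follows since $d(u,v)\le 3\|u-v\|$ by routing a path through $Y_{\max(i,j)}$ for $u\in Y_i$, $v\in Y_j$.

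The genuine gap is in the singleton claim for $\Gbd X$: the intermediate statement you aim for is false, not merely unproved. You want every Gromov sequence $x=(x_m)$ to satisfy $(x,y^{(i_*)})\in E$ for one of three \emph{fixed} references. Take your $u_n^{(i)}$ and set $x_m:=-2m\,u_{2m}^{(i_m)}$ with $i_m$ cycling through $1,2,3$. Each $x_m$ lies in $Y_{2m}$, distinct terms have orthogonal supports, and a bridge-point computation shows $(x_m)$ is a Gromov sequence. But $d(-ru,ru)=2r$ whenever $ru\in X$, so $\ips{x_m}{y_{2m}^{(i_m)}}{0}=0$; since every index occurs as $i_m$ infinitely often, $\ips{x_m}{y_n^{(i)}}{0}$ fails to tend to infinity jointly in $(m,n)$ for \emph{each} $i$, i.e.\ $(x,y^{(i)})\notin E$ for all three references. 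The identity $\sum_i\langle v_m,u_n^{(i)}\rangle=0$ only yields a good index for each individual pair $(m,n)$, and, as the example shows, no single index can serve all pairs. The argument can be repaired: the angle $\angle(v_m,u_n^{(i)})$ can exceed $5\pi/6$ for at most finitely many pairs $(n,i)$ per $m$ (since the block projections of $v_m$ are square-summable), and at each such $n$ for at most one $i$; pigeonholing over $m$ gives a subsequence $x'$ and an $i_*$ with all angles $\le 5\pi/6$, whence $x\sim x'\,E\,y^{(i_*)}$. But at that point you are reconstructing the paper's own mechanism, which dispenses with fixed references entirely and instead builds, by a diagonal cone-avoidance process adapted to the given pair $x,y$, a single comparison sequence $z=(nu_n)$ with $u_n$ on ever-later coordinate blocks such that $(x,z),(z,y)\in E$. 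A further small repair: your blocks $\mathrm{span}(e_n,e_{n+1})$ overlap for consecutive $n$, so the claimed bound $\ips{y_m^{(i)}}{y_n^{(j)}}{0}\ge c\min(m,n)$ can fail at $n=m+1$ (e.g.\ $u_m^{(i)}=e_{m+1}=-u_{m+1}^{(j)}$ gives Gromov product $0$); use disjoint blocks $\mathrm{span}(e_{2n-1},e_{2n})$ as the paper does.
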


\begin{proof}
We define $o$ to be the origin in $\ell^2$ so that $o\in Y_i$ for all $i$.
Completeness is easy, since each $Y_i$ is closed in $\ell^2$ and a finite
number of the sets $Y_i$ cover any given compact set. To show that $X$ is
CAT(0), it suffices to show that $X_i:=\bigcup_{j=1}^i Y_j$ is CAT(0), where
again we attach the induced length metric. We establish this fact
inductively. Since any ball in $\ell^2$ is CAT(0), it follows that $Y_i$ is
CAT(0) for all $i$. In particular $X_1$ is CAT(0). Suppose now that $X_i$ is
CAT(0) for some $i$. Now $Z_i:=X_i\cap Y_{i+1}$ is a convex and closed
(hence complete) subset of $\ell^2$, and $X_{i+1}$ is obtained by gluing
$X_i$ and $Y_{i+1}$ along $Z_i$. But gluing two CAT(0) spaces along a pair
of isometric complete convex spaces gives another CAT(0) space
\cite[II.11.1]{BH}, so $X_{i+1}$ is CAT(0), completing the inductive step.
Thus $X$ is CAT(0).

Suppose for the sake of contradiction that $\g:[0,\infty)\to X$ is a
geodesic ray parametrized by arclength with $\g(0)=o$. Writing
$\g(1)=x=(x_i)$, we have $x_j\ne 0$ for some $j\in\N$. But
$\|\g(j+1)\|=j+1$, where $\|\cdot\|$ is the $\ell^2$-norm, so
$\g(j+1)=y=(y_i)$, where $y_i=0$ for all $i\le j$. But, as a subset of
$\ell^2$, $X$ is star-shaped with respect to $o$, so the $\ell^2$ line
segment is the unique $X$-geodesic from $o$ to $y$, and this does not pass
through $x$.

Using $o$ as the basepoint, it is clear that $X$ has a single end, so it
remains to prove that $\Gbd X$ is a singleton set. Let us denote by $e_i$
the unit vector in the $i$th coordinate direction. We denote distance in $X$
by $d(\cdot,\cdot)$ and the $\ell^2$ norm by $|\cdot|$. Although in general
we know only that $d(u,v)\ge|u-v|$, the $\ell^2$ line segment from $u$ to
$v$ is contained in $X$ if either $v=0$ or $|v|=|u|$ (in the latter case
because points on the line segment have norm no larger than $|u|$), and so
$d(u,v)=|u-v|$ in both of these cases.

Let $x_n=2^{n-1/2}(e_{2^n}+e_{2^n+1})$, and so $|x_n|=2^n$, $n\in\N$. Also
$\angle(x_i,x_j)=\pi/2$ for any pair of distinct indices $i,j$. It follows
from Lemma~\ref{L:angle a} that $(x_n)$ is a Gromov sequence in $\ell^2$. In
fact, it is also a Gromov sequence in $X$. To see this, note that if $i\le
j$ and we write $x_j'=2^{i-1/2}(e_{2^j}+e_{2^j+1})$, then
$d(x_i,x_j')=2^{i+1/2}$ and so $\ips{x_i}{x_j'}{0}=2^i k$, where
$k:=1-1/\sqrt{2}$. Now $d(x_i,x_j)\le d(x_i,x_j')+d(x_j',x_j)$ and
$d(0,x_j)=d(0,x_j')+d(x_j',x_j)$, so $\ips{x_i}{x_j}{0}\ge 2^i k$ for all
$i\le j$. Thus $x$ is a Gromov sequence in $X$, and $\Gbd X$ is nonempty.

It remains to prove that all Gromov sequences are equivalent, so suppose
$x=(x_n)$ and $y=(y_n)$ are a pair of Gromov sequences. Without loss of
generality, we assume that $x_n,y_n\ne 0$ for all $n\in\N$. The idea of this
proof is similar to that of Proposition~\ref{P:Gb bad1}: we pick a Gromov
sequence $z=(z_n)$, where $z_n=nu_n\in X$ and $u_n$ lies in the unit sphere
$S$ of $\ell^2$, such that the sequences $x$ and $y$ avoid a cone around
each of the points $z_n$, and it will then follow that both $(x,z)$ and
$(z,y)$ are elements of the relation $E$. Unlike the earlier proof, the
requirement that $z_n\in X$ means that $u_n$ must depend on $n$, and this
means that we will need to iterate a countable number of times the process
of taking subsequences of $x$ and $y$. When taking subsequences for the
$n$th time, we will insist that the first $n$ entries in the subsequences of
stage $n-1$ are retained at stage $n$: this ensures that the diagonal
subsequences associated with this process for $x$ and $y$ are subsequences
of the $n$th iterated subsequences of $x$ and $y$, respectively, for each
$n\in\N$.

For $n\in\N$, let $S_n$ be the intersection of $S$ with the plane generated
by $e_{2n-1}$ and $e_{2n}$: thus $(S_n)$ is a sequence of pairwise
orthogonal great circles on $S$. Pick five vectors $v_{1,i}$, $i=1,\dots,5$,
in $S_1\cup S_2$ such that $\angle(v_{1,i},v_{1,j})\ge\pi/2$ for all $1\le
i\le 5$: we could for instance pick four such vectors in $S_1$ and an
arbitrary vector in $S_2$. Thus the cones $C_{1,i}$ of points
$w\in\ell^2\setminus\{0\}$ such that $\angle(v_{1,i},w)<\pi/4$ are pairwise
disjoint, and so at least three of them must be disjoint from the set
$\{x_1,y_1\}$. Of these three, at least one contains infinitely many $x_n$
and infinitely many $y_n$. By taking subsequences $x^1=(x_n^1)$,
$y^1=(y_n^1)$, of $x$ and $y$, respectively, and letting $u_1$ be one of the
vectors $v_{1,i}$, we get that $\angle(u_1,w)\ge\pi/4$ whenever $w=x_n^1$ or
$w=y_n^1$ for any $n\in\N$. We assume, as we may, that $x_1^1=x_1$ and
$y_1^1=y_1$.

For the second stage, we pick seven vectors $v_{2,i}$, $i=1,\dots,7$, in
$S_3\cup S_4$ such that $\angle(v_{2,i},v_{2,j})\ge\pi/2$ for all $1\le
i<j\le 7$: we could for instance pick four such vectors in $S_3$ and three
in $S_4$. At least three of the seven associated cones fail to intersect the
set $\{x_1^1,x_2^2,y_1^1,y_2^1\}$. It follows that by taking subsequences
$x^2=(x_n^2)$, $y^2=(y_n^2)$, of $x^1$ and $y^1$, respectively, and letting
$u_2$ be one of the vectors $v_{2,i}$, we get that $\angle(u_2,w)\ge\pi/4$
whenever $w=x_n^2$ or $w=y_n^2$ for any $n\in\N$. We assume, as we may, that
$x_n^2=x_n^1$ and $y_n^2=y_n^1$ for $n=1,2$.

We proceed in this manner, picking vectors $v_{m,i}$, $i=1,\dots,2m+3$, at
the $m$th stage from the next few circles $S_l$ that we have not used yet in
this construction, in such a way that $\angle(v_{m,i},v_{m,j})\ge\pi/2$ for
all $1\le i<j\le 2m+3$. We use as many circles as are needed to ensure that
this can be done: for $m\in\{2p-1,2p\}$, $p\in\N$, it suffices to use $p+1$
circles. Thus for $m=3$, we pick nine vectors from $S_5\cup S_6\cup S_7$,
for $m=4$ we pick eleven vectors from $S_8\cup S_9\cup S_{10}$, etc.
Carrying out the construction as before, we get a unit vector $u_m\in S_M$
for some $M\ge m$, and subsequences $x^m=(x_n^m)$, $y^m=(y_n^m)$ of
$x^{m-1}$ and $y^{m-1}$, respectively, such that $\angle(u_m,w)\ge\pi/4$
whenever $w=x_n^m$ or $w=y_n^m$ for some $n\in\N$, and such that
$x_n^m=x_n^{m-1}$ and $y_n^m=y_n^{m-1}$ whenever $n\le m$.

Defining $x_n'=x_n^n$, $y_n'=y_n^n$ for $n\in\N$, we get subsequences
$x'=(x_n')$, $y'=(y_n')$ of $x,y$, respectively, and a sequence $u=(u_n)$ on
$S$ such that $\angle(u_n,w)\ge\pi/4$ whenever $w\in\{x_m,y_m\}$ and
$n,m\in\N$, and such that $\angle(u_m,u_n)=\pi/2$ whenever $m\ne n$. Letting
$z_n=nu_n$, it follows from the fact that $u_m\in S_M$ for some $M\ge m$
that $z_n\in X$ for all $n\in\N$. Writing $z=(z_n)$, and arguing as we did
in the proof that $\Gbd X$ is nonempty, it follows that $z$ is a Gromov
sequence, and that both $(x,z)$ and $(z,y)$ lie in $E$. We leave the details
to the reader.
\end{proof}

It is easy to find a complete CAT(-1) space $X$ in which $\mf:\Gbd X\to\ebd
X$ is not injective. Indeed the hyperbolic plane $X=\H^2$ is one such
example. Since we can identify $\ibd X$ with $\Gbd X$ in this case (because
$X$ is complete CAT(-1), or alternatively because $X$ is a proper geodesic
Gromov hyperbolic space: see Section~\ref{S:prelims}), $\Gbd X$ can be
identified with the unit circle, and so its cardinality is that of the
continuum. On the other hand, it is clear that $\ebd X$ is a singleton set.
To prove that $\mf$ may fail to be surjective even if $X$ is complete
CAT(-1), we need to work a bit harder.

\begin{exa}\label{X:hyperbolic saucer}
Let $X$ be the Hilbert flying saucer, but let us replace our original metric
$d$ by the conformally distorted length metric $d'$ given infinitesimally at
a point with polar coordinates $(t,\t)\in[0,\infty)\times\partial B(0,1)$ by
$ds$, where $ds^2=dt^2+\sinh^2(t)d\t^2$. Then $(X,d')$ has a single end,
since $d'$-balls around $0$ coincide with $d$-balls.

Each of the sets $Y_i$ in Definition~\ref{D:Hfs} is CAT(-1) with respect to
$d'$: in fact any geodesic triangle in $Y_i$ is contained in an isometric
copy of a hyperbolic plane. Since we obtain $(X,d')$ by gluing a succession
of spaces $X_i$ and $Y_{i+1}$ along a pair of isometric complete convex
spaces, the resulting space $(X,d')$ is CAT(-1). It is also clearly
complete. Thus $\ibd X$ can be identified with $\Gbd X$. But the $d$- and
$d'$-geodesic paths from $0$ to $x\in X$ coincide as sets, so again $\ibd X$
is empty.
\end{exa}

We are now ready to prove our main theorems.

\begin{proof}[Proof of Theorem~\ref{T:main1}]
If we glue a pair of disjoint complete CAT(0) spaces $X$ and $Y$ by
identifying a single point in $X$ with another point in $Y$, then we get
another CAT(0) space which we denote $X+Y$ according to the basic gluing
theorem II.11.1 of \cite{BH}. It is also easy to see that $X+Y$ is complete
and that the ideal, Gromov, and end boundaries of $X+Y$ can be identified
with the disjoint union of the corresponding boundaries in $X$ and $Y$. The
maps $\nu$, $\mf$, and $\mp$ for $X+Y$ are also obtained by taking the
``disjoint union'' of the corresponding maps for $X$ and $Y$,
e.g.~$\nu_{X+Y}$ is defined by $\nu_{X+Y}(x)=\nu_X(x)$ for all $x\in\ibd X$
and $\nu_{X+Y}(y)=\nu_X(y)$ for all $y\in\ibd Y$. It follows that if we have
separate spaces where each of the maps $\nu$, $\mf$, and $\mp$ fails to be
injective or surjective, then by gluing all of these spaces at a single
point, we get a space where all three of these maps fails to be both
injective and surjective.

Now $\nu$ and $\mp$ fail to be injective in a Hilbert space of dimension
larger than $1$, and they fail to be surjective in the Hilbert flying
saucer, according to Proposition~\ref{P:Gb bad1} and Theorem~\ref{T:Gb
bad2}. As for $\mf$, it fails to be injective in the hyperbolic plane, and
it fails to be surjective in a hyperbolic version of the Hilbert flying
saucer (Example~\ref{X:hyperbolic saucer}). Since all of these spaces are
complete CAT(0) spaces, we can glue them to get a complete CAT(0) space that
fails to have any of these injectivity or surjectivity properties.

Finally, let $X$ be the subset of $\R^2$ consisting of the union of the line
segments from $(0,0)$ to $(n,1)$ for all $n\in\N$, and attach the Euclidean
length metric to $X$. Then $X$ is an unbounded tree but it has no end, so
$\ebd X$, $\Gbd X$, and $\ibd X$ are all empty.
\end{proof}

The proof of Theorem~\ref{T:main2} is very similar. The hyperbolic version
of the Hilbert flying saucer in Example~\ref{X:hyperbolic saucer} is
complete CAT(-1), so when we glue it at a single point to the hyperbolic
plane, the resulting space is also complete CAT(-1). By the properties of
the individual space, we see that $\mf$ for the glued space fails to be
either injective or surjective. Finally the example in the last paragraph of
the previous proof is CAT($-\infty$), so it also works for
Theorem~\ref{T:main2}.


\end{document}